\newlength{\stefan}
\DeclareMathSymbol{\subsetneq}{\mathord}{AMSb}{"26}
\newtheorem{lemma}{Lemma}[section]
\newtheorem{theorem}[lemma]{Theorem}
\newtheorem{proposition}[lemma]{Proposition}
\newtheorem{corollary}[lemma]{Corollary}
\theoremstyle{definition}
\newtheorem{definition}[lemma]{Definition}
\newtheorem{example}[lemma]{Example}
\newtheorem{remark}[lemma]{Remark}
\newcommand{\lp}{\longrightarrow}
\newcommand{\mb}{\mathbb}
\newcommand{\FF}{k}
\newcommand{\C}{\mb{C}}
\newcommand{\Z}{\mb{Z}}
\newcommand{\N}{\mb{N}}
\renewcommand{\a}{\mathfrak{a}}
\newcommand{\desda}{\Longleftrightarrow}
\renewcommand{\deg}{\operatorname{deg}}
\newcommand{\GA}{\textup{GA}}
\newcommand{\MA}{\textup{MA}}
\newcommand{\TA}{\textup{TA}}
\title{Computing preimages of points and curves under polynomial maps}
\author{
Michiel de Bondt~~~~~~~~Stefan Maubach\footnote{Funded by Veni-grant of council for the
physical sciences, Netherlands Organisation for scientific research (NWO)}\\ 
\small M.deBondt@math.ru.nl~~~~~~~~s.maubach@math.ru.nl\\
\small
Radboud University Nijmegen\\
\small Toernooiveld 1, The Netherlands\\ 
}
\begin{document}

\maketitle

\begin{abstract}
In this paper, we give two algorithms to compute preimages of curves under polynomial endomorphisms. In particular, this gives an efficient way of computing preimages of points. Furthermore, we explain the abstract setting under which one can iteratively compute the inverse of a polynomial automorphism. 
\end{abstract}

\section{Notations}

Let $R$ be a  commutative ring with one.

We write $\MA_{n}(R)$ as the set of polynomial maps $R^n\lp R^n$. $\GA_n(R)$ is the subset of $\MA_n(R)$ of invertible polynomial maps.

Define $A:=R^{[n]}:=R[X_1,\ldots,X_n]$. Write $I$ for the identity map on $R^n$.
We will use the notation $\FF$ for any field.

\section{Introduction and motivation}

If $F\in \GA_n(\FF)$ then there are several algorithms to compute the inverse. Essen's algorithm (see \cite{Essen90}) uses Groebner bases
to directly compute the inverse. This algorithm is in general not very efficient unless in low dimensions and also $F$ of low degree. 
In dimension $n=2$ there exist several other algorithms \cite{AdEs, Dick}, which are actual algorithms that decide in finite time if the map is invertible. These algorithms are due to the fact that in dimension $n=2$ the automorphism group is understood by the Jung-van der Kulk-theorem, and are rather efficient.

An ad-hoc way of computing the inverse of a map is computing its formal power series inverse step-by step. For this, bring your map on the form $F=I+H$ where $H$ has no linear or affine part. Any such endomorphism has an inverse $G$ in the formal power series ring $\FF[[X_1,\ldots,X_n]]$ of the form
$G=I-K$ where $K$ has no linear or affine part. What one can do is start computing the coefficients of $G$ from the lowest degree and up: if the coefficients of $G$ are known up to degree $d$, then the coefficients of degree $d+1$ can be computed since 
$F(G)$ is the identity up to degree $d$, and the part of degree $d+1$ fixes the coefficients of $G$ of degree $d+1$. 
In case $F$ is invertible, this procedure at some degree yields the polynomial inverse (the computation may continue but will only yield zero coefficients from that degree on). The efficiency of this approach is sometimes better, sometimes worse as the Essen-algorithm (depending on implementation, size and degree of the automorphism, etc.).

Another way of computing the inverse of a map is decomposing the map in simpler invertible maps. So far the only case where this works is in dimension two over a field, though the non-field case saw some progress through the recent work of Umirbaev-Shestakov \cite{US1, US2}. (This work provides  an algorithm to check if $F\in \TA_2(\C[Z])$, and gives a decomposition in this case.)

The obstructions in the above algorithms are obvious. There is one that we would like to mention, which is that the inverse of $F\in \GA_n(\FF)$ may 
have much larger degree than $F$ and may contain a huge number of nonzero coefficients. If $\deg(F)=d$, then $\deg(F^{-1})\leq d^{n-1}$ and this bound can be attained easily (possibly the bound is attained by a generic automorphism, even). This means that an inverse might not fit in any computer in explicit form, and one would actually require a decomposition into simpler automorphisms.\\

{\bf Our results:}\\

In this paper we address some of the above issues, mainly by focusing on computing preimages of points, in stead of computing the inverse directly. 
First, a variation on the above power-series computation of the inverse is done in section \ref{three}. 
In section \ref{four} we show how this viewpoint can be used 
to efficiently compute preimages of polynomial automorphism or even {\em endomorphisms}, without actually computing the inverse. 
We give two algorithms to do this: First, the already known algorithm of van den Essen, which uses Groebner bases, can be used.
Its efficiency may still be an issue, as it comes down to solving a system of $n$ equations in $n$ variables. 
The second method, which is a specialisation of the before-mentioned iterative computation of the inverse, seems to be rather efficient.
This algorithm computes a parametrized preimage curve (if it exists) to a given paramtetrized curve $g$, i.e. if $g(t):k\lp k^n$ is given, the algorithm computes $f(t)$ such that 
$F(f(t))=g(t)$. The advantage of the Groebner bases algorithm is that 
the latter  gives a criterion to decide if there is no preimage curve. 

We point out how this might  affect cryptographic systems like the TTM method (positively or negatively).

\section{Iterative computation of the inverse}
\label{three}

\subsection*{Examples and background}

Suppose that $F=I-H\in \MA_n(R)$ where $H\in (\a A)^{{\times n}}\subset \MA_{n}(R)$ where $\a$ is an ideal of $A$. The following is well-known:

\begin{proposition}\label{Inv-2}
Let $\a$ be an ideal of $A$ such that $\cap \a^n=0$.  
Let $F=I-H$ where $H\in \a A^{\times n}$.   Then $F$
has an inverse in the $\a$-adic completion of $A$.
\end{proposition}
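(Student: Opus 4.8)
The plan is to construct the inverse $G = I - K$ by successive approximation modulo higher and higher powers of $\a$, and then to verify that the resulting limit in the $\a$-adic completion $\hat A$ genuinely inverts $F$ on both sides. Concretely, I would build a sequence of polynomial maps $G_m = I - K_m$ with $K_m \in (\a A)^{\times n}$ such that $F \circ G_m \equiv I \pmod{\a^{m+1}}$ (componentwise), and such that $K_{m+1} \equiv K_m \pmod{\a^{m+1}}$, so that the sequence $(K_m)_m$ is Cauchy for the $\a$-adic topology and converges to some $K \in (\hat A)^{\times n}$.

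The induction starts with $K_1 = H$, since $F \circ (I - H) = (I - H) - H(I - H) \equiv I - H - H = I \pmod{\a^2}$ — wait, more carefully: $F(I-H) = (I-H) - H\circ(I-H)$, and since $H \in (\a A)^{\times n}$ we have $H\circ(I-H) \equiv H \pmod{\a^2}$ componentwise (each coordinate of $H$ lies in $\a$, so substituting $I-H$ for $I$ changes it only by an element of $\a^2$), giving $F(I-H) \equiv I - 2H + H \not\equiv \cdots$; the correct bookkeeping is $F(G_1) = I - H - H\circ G_1$ and one wants this $\equiv I$, so actually one should take $G_1 = I + H$ if $F = I - H$, i.e. $K_1 = -H$. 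In any case, for the inductive step, suppose $F \circ G_m = I - E_m$ with every coordinate of $E_m$ in $\a^{m+1}$. Set $G_{m+1} = G_m + E_m$ (a correction in $(\a^{m+1}A)^{\times n}$). Then $F \circ G_{m+1} = F\circ G_m + (\text{terms involving } H \text{ evaluated with an } \a^{m+1}\text{-perturbation})$; since $F = I - H$ with $H$ having entries in $\a$, the Taylor/substitution expansion shows $F(G_m + E_m) \equiv (I - E_m) + E_m \equiv I \pmod{\a^{m+2}}$, because the higher-order substitution corrections pick up at least one extra factor from $\a^{m+1}$ and one more from differentiating $H$ (whose entries are in $\a$), hence land in $\a^{m+2}$. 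This gives the refined approximation and closes the induction.

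Because $\bigcap_n \a^n = 0$, the $\a$-adic topology on $A$ is separated and $\hat A$ is its (separated) completion; the Cauchy sequence $(K_m)$ therefore has a well-defined limit $K \in \hat A^{\times n}$, and $G := I - K \in \MA_n(\hat A)$. Passing to the limit in $F \circ G_m \equiv I \pmod{\a^{m+1}}$ yields $F \circ G = I$ in $\hat A$. For the other side, I would argue either symmetrically — $G$ also has the form $I - K$ with $K$ having entries in the completed ideal, so the same construction produces a right inverse $G'$ for $G$, and then $G' = G'\circ(F\circ G) = (G'\circ F)\circ G$; combined with $F\circ G = I$ a standard monoid cancellation argument under composition gives $G\circ F = I$ as well — or more slickly, observe that any $I - H$ with entries of $H$ in a complete ideal is invertible on the left by the construction above, apply this to $F$ to get a left inverse, and then note left inverse $=$ right inverse once both exist.

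The main obstacle is the substitution estimate in the inductive step: one must check carefully that composing $F = I - H$ with a map that differs from $G_m$ by something in $(\a^{m+1}A)^{\times n}$ changes the discrepancy only by elements of $\a^{m+2}$. This is where the hypothesis $H \in (\a A)^{\times n}$ (rather than merely $H \equiv 0$ modulo $\a$ on the nose) is used twice — once to see that the leading correction cancels and once to see that every nonlinear correction term carries an extra factor of $\a$. Making this precise requires expanding each polynomial coordinate $H_i(G_m + E_m)$ via a finite Taylor expansion in the perturbation $E_m$ and tracking that every term beyond the zeroth order contributes a factor of $E_m$ (in $\a^{m+1}$) times a coefficient that is a partial derivative of $H_i$ (still in $\a$, since the coordinates of $H$ lie in $\a A$ and derivatives preserve membership up to lowering by one degree — here one should be slightly careful, but the standing hypothesis $H\in(\a A)^{\times n}$ with $\a$ an ideal makes the bookkeeping go through, since products of an $\a^{m+1}$-element with an $A$-element stay in $\a^{m+1}$ and the zeroth-order term already accounts for the full needed cancellation). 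Everything else is formal manipulation with the $\a$-adic topology.
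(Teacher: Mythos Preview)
Your Newton-style iteration is the right idea and matches the spirit of the paper's sketch, but the inductive step has a genuine gap. You claim the first-order correction $\partial_j H_i(G_m)\cdot (E_m)_j$ lies in $\a^{m+2}$ because the partial derivatives of $H_i$ are ``still in $\a$''. That is false for a general ideal $\a$: with $A=k[X]$, $\a=(X)$, $H=X\in\a$, one has $\partial_X H=1\notin\a$. More tellingly, take $A=k[X,Y]$, $\a=(X)$, $H=(XY,0)\in\a^{\times 2}$; then $F=(X(1-Y),Y)$, the $\a$-adic completion is $k[Y][[X]]$, and $F$ has no inverse there because $1-Y$ is not a unit in $k[Y]$. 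So under the bare hypothesis $H\in\a^{\times n}$ your estimate only gives $H(G_m+E_m)-H(G_m)\in\a^{m+1}$, not $\a^{m+2}$, and the induction does not advance.

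The paper does not actually prove the proposition; it calls it ``well-known'' and then sketches only the standard case $\a=(X_1,\dots,X_n)$ with $H\in(\a^2)^{\times n}$ (``$H$ has no affine part''), solving for the coefficients of the inverse degree by degree. In that setting your argument does work, but for a different reason than the one you give: each monomial of $H$ has degree at least $2$, so in $(G_m+E_m)^\alpha-(G_m)^\alpha$ every term carries one factor from $E_m\in\a^{m+1}$ and at least one remaining factor from $G_m\in\a^{\times n}$, hence lands in $\a^{m+2}$. The needed extra power of $\a$ comes from the \emph{other} variables in the monomial, not from differentiating $H$. The paper's subsequent ``composition-filtration'' axiom is precisely the abstract form of this one-step gain, and there it is \emph{assumed}, not deduced from $H\in\a^{\times n}$. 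In short: your Cauchy/limit framework is fine, but the substitution estimate requires the stronger standing hypothesis (effectively $H\in(\a^2)^{\times n}$ together with $I,G_m\in\a^{\times n}$) and the degree-counting justification above, not the derivative-in-$\a$ claim.
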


The above proposition is often applied for the case that $A=k[X_1,\ldots, X_n]$ and $\a=(X_1,\ldots,X_n)A$, and $=I-H$ where $H\in \a^2 A^{\times n}$.  
For completeness sake, we indicate how this $\a$-adic inverse is computed and how it can yield the inverse if it exists. 
If $I+K$ is the inverse, then it is clear that $K\in \a^2 A^{\times n}$. Now inductively, if the coefficients of $K$ up to and includig degree $d$ are known, giving a map $K_d$ which matches $K$ up to and including degree $d$, then $(I-H)(I+K) \mod \a^{d+1}= I$. 
Putting the coefficients of $K$ of degree $d+1$ as variables, and computing $(I+H)(I+K)$ modulo $\a^{d+2}$, 
yields a system of linear equations in $R$ that is always solvable. 
In particular, if $I+H$ has a polynomial inverse, then at some point in this process one will have the inverse. 

The interesting thing is that one can also apply such a technique for other ideals  $\a\subset A$; for example, 
if $H=(2X_2+X_2^2, 0)$, $R=\Z$, then one can compute the inverse in the $(2,X_1,X_2)$-adic completion, which in this case again describes the actual inverse of $I-H$. However, things get tricky - what are the requirements on $H$ to make this work? If an inverse exists, can one just approximate it or actually give an inverse? For example, if $R=\C[[t]]$, $F=X_1-tX_1$ and one starts to compute coefficients of an inverse in the $t$-adic completion, then there will be no point in the computation where the coefficient will be known (the coefficient is $(1-t)^{-1}$ while after $m$ steps one has $1+t+t^2+\ldots +t^m$).

In this section, we describe a slightly different method to iteratively compute the power series inverse, that is at first not necessarily more efficient, 
but has some conceptual value that we will see later on. Next to that, we will give the abstract setting in which this (and the power-series method) works for other cases than the ideal $(X_1,\ldots,X_n)$. The very rough, unpolished, basic algorithm (which never stops in this form)  is the following:\\
\ \\
{\bf Algorithm 1:} Suppose $I-H\in \MA_n(R)$ is given.\\
(1) Let $d=0$ and choose $K_0\in \MA_{n}(R)$ arbitrary (standard choice is $K_0=0$).\\
(2) Define $K_{d+1}:=H(I+K_d)$.\\
(3) Increase $d$, goto (2).\\

We will discuss later under what condition this algorithm makes sense and works - the idea is that $K_d$ converges to $K$ such that $I+K$ is the inverse.
A working example for later reference:


\begin{example}\label{One}
Define $A_i:=(X_1,\ldots,X_n)^{i+1}A$ and assume $H\in A_1$. Let $I+K$ be the formal power series inverse of $I-H$. Choose $K_0=0$ and define $K_i$ as above. Then $K\bmod{A_i} = K_i \bmod{A_i}$.  In particular, if $I-H$ is indeed invertible, then $K_i\bmod{A_i}$ ``equals'' $K$, where this ``equals'' means that taking the element in $\MA_n(R)$ which has the same coefficients as $K_i$ up to degree $i$, and zeros from  degree $i+1$ on, then this is equal to $K$. 
\end{example}

\subsection*{When does iteration leads to an inverse in finitely many steps?}

This section is more abstract than section \ref{vier} and beyond - the reader interested in the more applicable aspects of this paper can forward to section \ref{vier}. Also, it may be helpful to keep the (most important) example \ref{One} where $\a=(X_1,\ldots, X_n)A$, $A_0=\a A, A_1=\a^2A,A_2=\a^3 A,\ldots$ in mind when reading 
the below definitions:\\. 

Suppose $A\supseteq A_0\supseteq A_1\supseteq \ldots $ is a descending chain of ideals such that $\bigcap A_i =(0)$. 
We denote the projections $\pi_d: A\lp A/A_d$ as well as $\pi^{d+e}_d: A/A_{d+e}\lp A/A_d$. 
We assume that for each $d$ we have a section $s_d:A/A_d\lp A$, i.e. $\pi_d s_d (a)=a$ for all $a\in A/A_d$.\\

\begin{definition}   We call $A\supseteq A_0\supseteq \ldots $ a {\em composition-filtration} if for any 
$H\in (A_1)^n, G,\tilde{G}\in (A_0)^n$ we have: $\pi_d(G)=\pi_d(\tilde{G}) \lp \pi_{d+1}(H(G))=\pi_{d+1}(H(\tilde{G}))$. \\
We say that the $s_d$ form a {\em converging system of sections}\footnote{The authors did not find any already existing term in the literature.} if for all $a\in A$ there exists $D\in \N$ such that if $d\geq D$, then  
$s_d\pi_d(a)=a$.
\end{definition}

Let us explain how the above definition appears in example \ref{One}. Here, $A_i:=(X_1,\ldots,X_n)^{i+1}A$. This indeed is a composition-filtration as can be easily verified (substituting something having no terms below degree $d>0$ into something having no terms below degree $2$  yields something having only terms of degree $2d$ or higher). The sections $s_d$ here are the obvious canonical bijective map sending $A/A_d$ to the elements in $A$ of degree $\leq d$. 
Indeed, given $a\in A$, then one can take $D:=\deg{a}$, showing that this set of sections is a converging system of sections.


We define the following abbreviation of assumptions:\\
\ \\
{\em {\bf (P)} stands for the following list of assumptions: 
$A_0\supseteq A_1\supseteq \ldots$ is a composition-filtration, and 
we have a returning system of sections $s_i:A_i\lp A$. Let $F=I-H$ and $F^{-1}=I+K$.
Assume $H\in (A_1)^n$, $I\in (A_0)^n$ the identity map. }

\subsection*{The iterative inverse algorithm}


\begin{definition} Define $\varphi: \MA_n(R)\lp \MA_n(R)$ by $\varphi(K):=H(I+K)$. 
\end{definition}

\begin{lemma}\label{L1}
Assume (P). 
Let $\tilde{K}\in (A_0)^n\subseteq \MA_n(R)$. 
If $\pi_{d} \tilde{K}=\pi_d K$, then $\pi_{d+1}\varphi{\tilde{K}}=\pi_{d+1}K$. 
\end{lemma}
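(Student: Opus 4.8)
The statement to prove is Lemma~\ref{L1}: assuming (P), if $\tilde K \in (A_0)^n$ satisfies $\pi_d \tilde K = \pi_d K$, then $\pi_{d+1}\varphi(\tilde K) = \pi_{d+1} K$. The natural strategy is to rewrite $K$ as a fixed point of $\varphi$ and then invoke the defining property of a composition-filtration. First I would establish the fixed-point identity $\varphi(K) = K$. Since $F^{-1} = I + K$ and $F = I - H$, composing gives $(I-H)\circ(I+K) = I$, i.e.\ $I + K - H(I+K) = I$, hence $K = H(I+K) = \varphi(K)$. (One should note this identity holds in the completion / as an identity of formal maps, which is exactly the setting in which $K$ is defined; (P) guarantees $K$ exists via Proposition~\ref{Inv-2}-type reasoning and lies in $(A_0)^n$, in fact in $(A_1)^n$.)

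Next, observe that $\varphi(\tilde K) = H(I + \tilde K)$ by definition of $\varphi$, and $\varphi(K) = H(I+K)$. Set $G := I + \tilde K$ and $\hat G := I + K$; both lie in $(A_0)^n$ because $I \in (A_0)^n$ and $\tilde K, K \in (A_0)^n$. The hypothesis $\pi_d \tilde K = \pi_d K$ gives $\pi_d G = \pi_d \hat G$ (adding the fixed element $I$ respects the projection, as $\pi_d$ is a ring/algebra map applied coordinatewise). Now apply the composition-filtration property directly with this $H \in (A_1)^n$ and these $G, \hat G \in (A_0)^n$: it yields $\pi_{d+1}(H(G)) = \pi_{d+1}(H(\hat G))$, that is, $\pi_{d+1}\varphi(\tilde K) = \pi_{d+1}\varphi(K)$. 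Combined with $\varphi(K) = K$ from the first step, this gives $\pi_{d+1}\varphi(\tilde K) = \pi_{d+1} K$, as desired.

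The only genuinely delicate point is the first step: justifying $\varphi(K) = K$, i.e.\ that $K$ really is a fixed point of $\varphi$ in a sense compatible with applying $\pi_{d+1}$. This requires knowing that the formal inverse $I + K$ exists as an element on which $H$ can be evaluated and that the composition $(I-H)\circ(I+K)$ makes sense and equals $I$ — which is precisely what the composition-filtration and converging-system-of-sections setup in (P) is designed to supply (cf.\ the discussion after Proposition~\ref{Inv-2} and Example~\ref{One}). I would also want to check the harmless-looking but necessary fact that $\pi_{d+1}$ respects the relevant operations, so that $\pi_d \tilde K = \pi_d K \Rightarrow \pi_d(I+\tilde K) = \pi_d(I + K)$; this is immediate since the $A_i$ are ideals and $\pi_i$ are the canonical quotient maps. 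Everything else is a one-line application of the composition-filtration axiom, so the lemma is essentially a direct unwinding of definitions once the fixed-point identity is in place.
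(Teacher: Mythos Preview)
Your proof is correct and follows essentially the same approach as the paper: both arguments combine the fixed-point identity $K=H(I+K)$, obtained from expanding $(I-H)(I+K)=I$, with a direct application of the composition-filtration axiom to $H\in(A_1)^n$ and the pair $I+\tilde K,\,I+K\in(A_0)^n$. The only difference is order of presentation and that you spell out a few routine justifications the paper leaves implicit.
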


\begin{proof}
Because we have a composition-filtration, $\pi_{d}(I+ \tilde{K})=\pi_d (I+K)$
implies $\pi_{d+1}(H(I+\tilde{K}))=\pi_{d+1}(H(I+K))$. We claim that the latter equals $\pi_{d+1}(K)$: since
$I=(I-H)(I+K)=I+K-H(I+K)$ we have $H(I+K)=K$.
\end{proof}

\begin{corollary}
Assuming (P), the chain $0=K_0, K_{d+1}:=s_{d+1}\pi_{d+1}\varphi K_d$ stabilises.
\end{corollary}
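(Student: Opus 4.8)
The plan is to show that the sequence $K_0, K_1, K_2, \ldots$ defined by $K_0 = 0$ and $K_{d+1} = s_{d+1}\pi_{d+1}\varphi K_d$ is eventually constant, equal to $K$. First I would establish by induction on $d$ that $\pi_d K_d = \pi_d K$. The base case $d=0$ is immediate since $\pi_0 K_0 = \pi_0 0$ and $K \in (A_0)^n$ forces... wait, actually one needs $\pi_0 K_0 = \pi_0 K$; since $K \in (A_0)^n$ we have $\pi_0 K = 0 = \pi_0 K_0$, so the base case holds. For the inductive step, suppose $\pi_d K_d = \pi_d K$. Each $K_d$ lies in $(A_0)^n$ because it is in the image of a section $s_{d}$ (for $d \geq 1$) and $K_0 = 0 \in (A_0)^n$; so Lemma \ref{L1} applies with $\tilde K = K_d$, giving $\pi_{d+1}\varphi K_d = \pi_{d+1} K$. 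Since $s_{d+1}$ is a section, $\pi_{d+1} s_{d+1} = \mathrm{id}$ on $A/A_{d+1}$, hence $\pi_{d+1} K_{d+1} = \pi_{d+1} s_{d+1}\pi_{d+1}\varphi K_d = \pi_{d+1}\varphi K_d = \pi_{d+1} K$, completing the induction.

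Next I would invoke the converging system of sections. For each coordinate of $K$ (or, treating $K$ as living coordinatewise in $A$), the converging property gives a $D$ such that $s_d \pi_d(K) = K$ for all $d \geq D$; taking the maximum over the finitely many coordinates gives a single $D$ that works for all of them, so $s_d \pi_d K = K$ in $(A_0)^n$ for $d \geq D$. Combining this with the induction above: for $d \geq D$ we have $K_d = s_d \pi_d \varphi K_{d-1} = s_d \pi_d K = K$, where the middle equality uses $\pi_d \varphi K_{d-1} = \pi_d K$ from the inductive claim. Hence $K_d = K$ for all $d \geq D$, so the chain stabilises.

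The main subtlety to watch is the bookkeeping around which objects live in $(A_0)^n$ so that Lemma \ref{L1} is legitimately applicable at each step — in particular that every $K_d$ stays in $(A_0)^n$, which holds because $\varphi K = H(I+K)$ has its values governed by $H \in (A_1)^n \subseteq (A_0)^n$ and because the sections $s_{d+1}$ map into $A$ with the property that $\pi_{d+1}$ recovers the class, but one should also check the section can be taken to land in $(A_0)^n$ (equivalently, that $\pi_{d+1}\varphi K_d \in (A_0/A_{d+1})^n$, which follows since $\varphi K_d = H(I+K_d)$ with $H \in (A_1)^n$). Apart from this, the argument is a routine induction plus an application of the definition of a converging system of sections, so no serious obstacle is expected; the only real content is Lemma \ref{L1}, which is already proved.
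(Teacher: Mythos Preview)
Your proof is correct and follows essentially the same route as the paper's: an induction showing $\pi_d K_d = \pi_d K$ via Lemma~\ref{L1}, followed by the converging-sections property to conclude $K_d = K$ for $d$ large. Your chain $K_d = s_d\pi_d\varphi K_{d-1} = s_d\pi_d K = K$ is a mild streamlining of the paper's version, which inserts the intermediate observation $s_d\pi_d K_d = K_d$; and you are more explicit than the paper about the hypothesis $K_d \in (A_0)^n$ needed to invoke Lemma~\ref{L1}, a point the paper passes over silently.
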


\begin{proof}
First we give a proof by induction on $d$ to show that $\pi_d K_d=\pi_d K$.(*) This statement is obviously true for $d=0$. Assuming $\pi_d K_d= \pi_d K$, we get by lemma \ref{L1}
that $\pi_{d+1}K=\pi_{d+1}\varphi K_d$. Since $\pi_{d+1}s_{d+1}\pi_{d+1}=\pi_{d+1}$ for every $d$,
 $\pi_{d+1}K=\pi_{d+1} \varphi K_d= \pi_{d+1}s_{d+1}\pi_{d+1}\varphi K_d=\pi_{d+1} K_{d+1}$ (end induction).

Now $s_d\pi_dK_{d}=s_d\pi_d s_d \pi_d \varphi K_{d-1}$ and since $\pi_ds_d$ is the identity this equals $s_d\pi_d \varphi K_{d-1}=K_d$, thus $s_d\pi_d K_d=K_d$ (**).
Since we have a returning system of sections, we have some $D\in \N$ such that if
$d\geq D$ then $s_d\pi_d K=K$ (***). Thus, 
\[ K_d\overset{(**)}{=}s_d\pi_d K_d\overset{(*)}{=}s_d \pi_d K\overset{(***)}{=}K \]
 whenever $d\geq D$ (only to ensure (***)).
\end{proof}

The above corollary thus gives an algorithm, which we now denote separately:\\
\ \\
{\bf Algorithm 2:} Assume (P). Input $H\in A_1 $.\\
(1) Let $d=0$ and $K_0=0\in A^n$.\\
(2) Define $K_{d+1}:=s_{d+1}\pi_{d+1} H(I+K_d)$.\\
(3) If $K_d=K_{d+1}$, and $K_{d-1}\not = K_d$, then check if $H(I+K_d)=K_d$. If YES then STOP; output $I+K_d$.\\
(4) Increase $d$, goto (2).\\

\subsection*{More examples}

\begin{example}
$A:=\Z[X_1,\ldots,X_n]$,  and  $A_i:=2^iA$. 
Let $F=(x+2y+4x^2,y+2x^2)$ and thus $H=(2y+4x^2,2x^2)$ in $(A_1)^2$.
One can check that this is indeed a composition-filtration.
The sections $s_d:A/A_d \lp A$ must be chosen a bit carefully: 
we know that the inverse of $F$ will have coefficients that are ``not far from zero'', i.e.\@ there is a bound $D$ for which the coefficients must be in the interval $[-D,D]$. Therefore, we take the section map $s$ that sends elements of $\Z/(2^d\Z)$ into the interval $[-2^{d-1}, 2^{d-1}-1]$, which is indeed a returning section. If one chooses the interval $[0,2^k-1]$ as is custom, it is not a returning section.

Now the iteration process yields $K_0:=(0,0), K_1=K_0, K_2=(-2y,2x^2), K_3=K_2.$
The algorithm in step 3 now checks if $I+K_3$ is the inverse of $I-H$, but it is not, so we continue. $K_4:=(-2y, -2x^2-8xy-8y^2), K_5:=(-2y,-2x^2+8xy-8y^2), K_6=K_5$ and $I+K_5$ turns out to be the inverse. 
\end{example}

\begin{example}
Let $F\in \GA_n(\FF)$ be such that the linear part of $F$ is $I$. For example, let $F=(X+Y^2+2X^2Y+X^4, Y+X^2)$. Let $H:=F-I$. 
We define $A_d:=(X,Y)^{d+1}\FF[X,Y] \subset \FF[X,Y]$. 
Now $K_0:=(0,0)=K_1=K_2, K_3=(Y^2,X^2), K_4=(Y^2,X^2-2XY^2), K_5=(Y^2,X^2-2XY^2+Y^4)$ and since $K_6=K_5$ it is time to check if this might be the inverse (otherwise one has to continue). Indeed, $(I-K_5)F=I$. 
\end{example}

In the case that $A=R^{[n]}$ where $R$ is a reduced $\FF$-algebra, and \\
$A_d=(X_1,\ldots,X_n)^{d+1}A$, the algorithm is effective in deciding if a map is invertible. 
This is due to the theorem that  $\deg(F^{-1})\leq \deg(F)^{n-1}$ if $R$ is a reduced ring (corollary 2.3.4 in \cite{Essenboek}).

\section{Injective morphisms}
\label{vier}

\subsection*{Iterative preimage algorithm}

\label{four}

In this section, we will assume that $F:R^n\lp R^n$ is a polynomial endomorphism of the form $F=I-H$ where $H$ has affine part zero. 
Suppose $g(t):=(g_1(t),\ldots,g_n(t))\in (R[t])^n$ is a nonzero curve satisfying $g(0)=0$, and $f(t):=F(g(t))$, which hence is a curve contained in the image of $F$. 
(Note that $f(0)=F(g(0))=F(0)=0$.)
Since $F$ is of the described form, its extension $F:R[[t]]^n\lp R[[t]]^n$ is an automorphism. Hence, there is at most one parametrized curve $\tilde g(t)$ satisfying $\tilde g(0)=0$ such that $F(\tilde g(t))=f(t)$.
(Note: being the image of such a parametrized curve may be something stronger as being a curve which is contained in the image of $F$!)
We will describe a method to compute the curve $g(t):=(g_1(t),\ldots,g_n(t))$ given $f(t)$ and $F$. 

\begin{remark} \label{remark} Given $F=I-H$ where the affine part of $H$ is zero, and $f(t)\in R[t]^n$ such that $f(0)=0$. Then there exists at most one 
$\tilde g(t)\in R[t]^n$ satisfying $\tilde g(0)=0$ such that $F(\tilde g)=f$. 
\end{remark}

\begin{proof}
Since $F$ is of the form $I-H$ where $H$ has affine part zero, it has a power series inverse $G$. If $f\in R[[t]]$ such that $f(0)=0$, then $g:=G(f)$ is a well-defined element of $R[[t]]$. Since in this case, $g=G(f)=G(F(\tilde g))=\tilde g$, $\tilde g$ is unique. In case $\tilde g\in R[t]^n$, there is one solution, if $\tilde g \in R[[t]]^n\backslash R[t]^n$ there is none.
\end{proof}

\noindent
{\bf Algorithm 3:} $F,f$ as above. \\
(1) Let $d=1$ and $K_1=0\in R^n$.\\
(2) Define $K_{d+1}:= H(f+K_d) \mod(t^{d+1})$\\
(3) If $K_d=K_{d+1}$, and $K_{d-1}\not = K_d$, then check if $H(f+K_d)=K_d$. If YES then STOP; output $f+K_d$.\\
(4) Increase $d$, goto (2).\\

\begin{proposition}
If $f\in R[t]^n$ satisfying $f(0)=0$ and there is some $g\in R[t]^n, g(0)=0$ such that $F(g)=f$, then the above process terminates, and the output equals $g$.  Furthermore,
$g$ is unique. 
\end{proposition}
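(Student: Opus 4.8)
The plan is to mirror the structure of the earlier iterative-inverse argument (Lemma~\ref{L1} and its corollary), but now with the ideal filtration $A_d=(t^{d+1})R[t]$ on the curve ring $R[t]$ and with $\varphi$ replaced by the map $K\mapsto H(f+K)$. Concretely, I would first observe that substitution into $H$ (whose affine part is zero) raises $t$-order: if $\tilde g\equiv g \bmod t^{d+1}$ with $\tilde g(0)=g(0)=0$, then $H(f+\tilde g)\equiv H(f+g)\bmod t^{d+2}$, since $H$ has no constant or linear term and $f,\tilde g,g$ all vanish at $t=0$. This is the analogue of the ``composition-filtration'' property, and it is what makes the $t$-adic iteration converge one degree at a time.

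Next I would establish the key invariant by induction on $d$: if $g\in R[t]^n$ with $g(0)=0$ satisfies $F(g)=f$, then $K_d\equiv g\bmod t^d$ for every $d\geq 1$. The base case $d=1$ holds since $K_1=0$ and $g(0)=0$. For the inductive step, from $f=F(g)=g-H(f+g)$... wait — more precisely, from $F(g)=f$ we get $g-H(g)=f$, so $H(g)=g-f$; but we want to see $g=H(f+g)$. Actually the algorithm uses $\varphi(K)=H(f+K)$, so I should check the fixed-point identity: if $g=H(f+g)$, that is the relation we need, and it must follow from $F(g)=f$. Here I'd unwind: writing $G=I+L$ for the power-series inverse of $F=I-H$ (so $L=H\circ(I+L)$), the unique power-series solution of $F(\tilde g)=f$ is $\tilde g=G(f)=f+L(f)$, and one checks $\tilde g=f+H(f+L(f))=f+H(\tilde g)$... hmm, that gives $\tilde g - f = H(\tilde g)$, i.e. $H(\tilde g)=\tilde g - f$, which after adding $f$ inside is the relation $\varphi$ iterates toward only if $K_d\to \tilde g-f$, not $\tilde g$. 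Let me instead define things to match the algorithm literally: the correct statement is $K_d\equiv (g-f)\bmod t^{d}$ is false too since $K_1=0$ but $g-f$ need not vanish to order $1$... so actually $K_d\equiv g\bmod t^d$ with the fixed point being $g=H(f+g)+\text{(something)}$. The cleanest route: since $F(g)=f$ means $g-H(g)=f$, substitute $g=f+K$ to get $f+K-H(f+K)=f$, hence $K=H(f+K)$. So the genuine fixed point of $\varphi:K\mapsto H(f+K)$ is $K^\ast:=g-f$, and the invariant to prove is $K_d\equiv K^\ast\bmod t^d$; the output is then $f+K_d=g$.

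With the invariant in hand, termination follows from $g\in R[t]^n$: once $d$ exceeds $\deg g$ (equivalently $\deg K^\ast$), the truncations $K_d=K^\ast\bmod t^{d}$ stabilize, so $K_d=K_{d+1}=K^\ast$ as honest polynomials, the check $H(f+K_d)=K_d$ in step~(3) succeeds (it is exactly the fixed-point identity derived above), and the algorithm halts outputting $f+K_d=g$. Uniqueness of $g$ is already Remark~\ref{remark}: any two polynomial preimages agree in $R[[t]]^n$ since $F$ is invertible there, hence are equal. The main obstacle, and the part deserving care, is pinning down the precise relation between the iterate fixed point and the sought curve — i.e. verifying that $K^\ast=g-f$ really is a fixed point of $\varphi$ and that $\varphi$ contracts with respect to $t$-adic order on the coset $\{K\in R[t]^n: K(0)=0\}$ — together with the bookkeeping that $K_d$ stabilizes to a polynomial rather than merely a power series; everything else is a transcription of the filtration argument from Section~\ref{three} with $(X_1,\dots,X_n)$ replaced by $(t)$.
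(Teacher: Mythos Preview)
Your approach is correct and is essentially identical to the paper's: both prove by induction that $K_d\equiv g-f\bmod t^{d}$, using that $H$ (having zero affine part) raises $t$-adic order on inputs vanishing at $t=0$, together with the fixed-point identity $H(g)=g-f$ (equivalently $K^\ast:=g-f$ satisfies $K^\ast=H(f+K^\ast)$), and then read off termination once $d>\deg(g-f)$; uniqueness is Remark~\ref{remark} in both cases. One small slip in your exploratory discussion: you momentarily claim ``$g-f$ need not vanish to order $1$'', but since $g(0)=f(0)=0$ we do have $g-f\equiv 0\bmod t$, so the base case $K_1=0\equiv g-f\bmod t$ is immediate --- you recover the right invariant a few lines later, so this does not affect the argument.
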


\begin{proof}
Uniqueness follows from remark \ref{remark}. We will prove that $K_d\equiv g-f \bmod{t^{d}}$.
The case $d=1$ is trivial.
Assume $K_d\equiv g-f  \bmod{t^{d}}$. Then $K_{d+1} = H(f+K_d)$.
Now remark that since $H$ has affine part zero, then for any $p,q\in R[t]$ satisfying $p(0)=q(0)=0$, we have $p\equiv q  \bmod {t^d} \Rightarrow H(p) \equiv H(q) \bmod t^{d+1}$.
Note that $f+K_d\equiv g \bmod t^{d}$, hence $H(f+K_d)\equiv H(g) \bmod t^{d+1}$. Since $f=F(g)=(I-H)(g)=g-H(g)$, we have $H(g)=g-f$. 
Concluding, $K_{d+1}=H(f+K_d)\equiv g-f \bmod t^{d+1}$. The proposition now follows.
\end{proof}

In case $F$ is an automorphism, there is obviously no need to require that $f=F(g)$ for some $g$; one only needs to assume that $f(0)=0$, for then 
$g:=F^{-1}(f)$ satisfies $g(0)=0$. 

\begin{remark}
If $F$ is an automorphism, then a preimage of $c\in R^n$ can be computed by computing the preimage curve $g(t)$ of $ct:=(c_1t,\ldots,c_nt)$, and then 
$g(1)$ is the preimage of $c$ (since $F(g(t))=ct$). (One could take any curve $f$ through $c$ satisfying $f(0)=0$, though.) Our experiments have shown this setting to be quite efficient. 
\end{remark}

\subsection*{Groebner bases preimage algorithm}

In this section we give another method to compute preimages of points and curves under polynomial automorphisms. We stick to the case where $R=k$, a field. 

In \cite{Essenboek} theorem 3.2.1/ 3.2.3 (page 64) an algorithm is given to compute the inverse (and effectively decide if an endomorphism is an automorphism). We will quote the case we will need here:

\begin{theorem}[van den Essen] Let $F\in (k[X_1,\ldots,X_n])^n$ be a polynomial endomorphism. Let $I=(Y_1-F_1,\ldots,Y_n-F_n)$ be 
an ideal in $k[X_1,\ldots,X_n, Y_1,\ldots, Y_n]$. 
Let $B$ be the reduced groebner basis of $I$ with respect to an ordering where $Y^{\alpha} < X_i$ for each $\alpha\in \N^n, 1\leq i\leq n$. 
$F$ is invertible if and only if $B$ is of the form $(X_1-G_1(Y),\ldots,X_n-G_n(Y))$, and in that case $G:=(G_1,\ldots,G_n)$ is the inverse of $F$.
\end{theorem}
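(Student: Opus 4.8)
The plan is to prove this as a standard application of the theory of Gröbner bases with respect to an elimination-style monomial order. Let me sketch the argument.

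=== PROOF PROPOSAL ===

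\begin{proof}[Proof sketch]
The plan is to use the theory of elimination via Gröbner bases, applied to the graph ideal $I = (Y_1 - F_1, \ldots, Y_n - F_n)$.

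\textbf{Step 1 (The ring $k[X,Y]/I$ is a polynomial ring in $Y$).} First I would observe that the quotient map $k[Y_1,\ldots,Y_n] \lp k[X_1,\ldots,X_n,Y_1,\ldots,Y_n]/I$ is an isomorphism. Surjectivity holds because modulo $I$ one has $Y_i \equiv F_i(X)$, hence $X_i$ is congruent to $F_i$, but more to the point every element of $k[X,Y]$ is congruent modulo $I$ to an element of $k[X]$ (substitute $Y_i \mapsto F_i$) and hence the images of the $X_i$ generate; actually the cleanest statement is: the $k$-algebra map $k[X,Y] \to k[X]$, $X_i \mapsto X_i$, $Y_i \mapsto F_i(X)$, has kernel exactly $I$ and is surjective, so $k[X,Y]/I \cong k[X]$. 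The real content needed is that for \emph{any} endomorphism $F$, the class of each $Y_i$ in the quotient, together with the classes of the $X_i$, and the fact that $B$ is reduced, pins down the structure.

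\textbf{Step 2 (Shape of the reduced Gröbner basis).} For the monomial order chosen — any order in which every monomial $Y^\alpha$ is smaller than every $X_i$ — the leading term of each basis element is governed by the $X$-variables whenever that element involves $X$ at all. If $F$ is invertible with inverse $G$, then $X_i - G_i(Y) \in I$ for each $i$ (since substituting $Y_j \mapsto F_j$ into $X_i - G_i(Y)$ gives $X_i - G_i(F(X)) = 0$). I would show these $n$ polynomials $X_i - G_i(Y)$ together with (Buchberger-reduced) consequences already form the reduced Gröbner basis: their leading terms are $X_1,\ldots,X_n$, S-polynomials among them reduce to zero because each $X_i$ appears as a leading term in exactly one generator, and any element of $I$ reduces to zero modulo them because after eliminating all $X_i$ one lands in $I \cap k[Y] = (0)$ (this last equality follows from Step 1). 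Reducedness then forces $B = (X_1 - G_1(Y), \ldots, X_n - G_n(Y))$ with the $G_i \in k[Y]$ already in normal form. Conversely, if $B$ has this shape, then $X_i - G_i(Y) \in I$ gives $G_i(F(X)) = X_i$ after the substitution $Y \mapsto F(X)$, i.e.\ $G \circ F = I$; and since $k[X,Y]/I \cong k[Y]$ via $Y \mapsto Y$ forces $F(G(Y)) \equiv Y$ to hold identically (evaluating the relation $Y_i - F_i(X) \equiv 0$ with $X \mapsto G(Y)$), one also gets $F \circ G = I$, so $F$ is an automorphism with inverse $G$.

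\textbf{Step 3 (Main obstacle).} The main technical point — and the part I expect to require the most care — is justifying the elimination property: that $I \cap k[Y_1,\ldots,Y_n] = (0)$ when $F$ is invertible, and more generally controlling which monomials can appear as leading terms of reduced basis elements under an order that only compares $X$'s against $Y$'s (it need not be a pure lexicographic elimination order on the $X$-block, so one must be slightly careful that ``$Y^\alpha < X_i$'' suffices). This is exactly where van den Essen's original argument does the work, and I would simply cite \cite{Essenboek}, Theorem 3.2.1/3.2.3, rather than reproduce the Buchberger-algorithm bookkeeping. Everything else is the routine translation between ``$p \in I$'' and ``$p$ vanishes after $Y \mapsto F(X)$'' plus the observation that a reduced Gröbner basis is unique.
\end{proof}
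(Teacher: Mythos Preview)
The paper does not prove this theorem at all: it is quoted verbatim as van den Essen's result, with an explicit pointer to \cite{Essenboek}, Theorem 3.2.1/3.2.3, and no argument is supplied. So there is no ``paper's own proof'' to compare your sketch against; the authors treat this statement purely as background input to their Corollary and to Theorem~\ref{groebner}.

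That said, your sketch is a correct outline of the standard proof, and in fact you already anticipate this situation in your Step~3 by proposing to cite exactly the same reference. One small wrinkle worth tightening if you ever write it out in full: in Step~1 you establish $k[X,Y]/I \cong k[X]$, but the elimination fact you use in Step~2, namely $I \cap k[Y] = (0)$, is \emph{not} automatic from that isomorphism alone --- it is equivalent to the algebraic independence of $F_1,\ldots,F_n$ over $k$, which holds precisely because $F$ is assumed invertible in that direction of the argument. Your Step~2 converse (from the shape of $B$ to $F\circ G = I$) is cleanest if phrased as: since $Y_i - F_i(X)$ lies in the ideal generated by the $X_j - G_j(Y)$, substituting $X_j \mapsto G_j(Y)$ kills the right-hand side and yields $Y_i = F_i(G(Y))$.
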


The following is straightforward:

\begin{corollary}
Let $F\in (k[X_1,\ldots,X_n])^n$ be a polynomial endomorphism. Let $I=(c_1-F_1,\ldots,c_n-F_n)$ where $c_i\in k$ be 
an ideal in $k[X_1,\ldots,X_n]$. 
Let $B$ be the reduced groebner basis of $I$. Then \\
(1) $B=(X_1-b_1,\ldots,X_n-b_n)$ if and only if $F(X_1,\ldots,X_n)=c$ has only one solution $X_i=b_i$.  \\
(2) If $B$ is not of the form in (1) then $F$ is not an automorphism.
\end{corollary}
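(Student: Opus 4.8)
The plan is to deduce both statements directly from basic Gröbner basis theory applied to the zero-dimensional (or empty) ideal $I = (c_1 - F_1, \ldots, c_n - F_n) \subseteq k[X_1,\ldots,X_n]$, interpreting membership of specific polynomials in $I$ in terms of the variety $V(I)$, which is exactly the solution set of $F(X) = c$.

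For part (1), I would argue the two implications separately. Suppose first that $F(X) = c$ has the unique solution $X_i = b_i$. Then the linear polynomials $X_i - b_i$ vanish on $V(I)$, and since over a field $k$ (passing to $\bar k$ if necessary, but note a single $k$-rational point means the reduced structure is already defined over $k$) the ideal $(X_1 - b_1, \ldots, X_n - b_n)$ is maximal with $V = \{(b_1,\ldots,b_n)\}$, one checks $I \subseteq (X_1 - b_1,\ldots,X_n - b_n)$, and the reverse inclusion follows because $(X_1-b_1,\ldots,X_n-b_n)$ is maximal and $I \neq (1)$ (it has a solution). Hence $I = (X_1 - b_1,\ldots,X_n-b_n)$. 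This generating set is already a reduced Gröbner basis with respect to any monomial order (the leading terms $X_i$ are pairwise coprime, so all $S$-polynomials reduce to zero, and no term is divisible by another leading term), and reduced Gröbner bases are unique, so $B = (X_1 - b_1,\ldots,X_n - b_n)$. Conversely, if $B = (X_1-b_1,\ldots,X_n-b_n)$, then $I = (X_1-b_1,\ldots,X_n-b_n)$, whose variety is the single point $(b_1,\ldots,b_n)$; since the $b_i \in k$, this is the one solution over $k$ as well.

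For part (2), I would prove the contrapositive: if $F$ is an automorphism, then $B$ has the form in (1). If $F$ is an automorphism with inverse $G = (G_1,\ldots,G_n)$, then $F(X) = c$ has the unique solution $X = G(c) =: b \in k^n$, so part (1) applies and $B = (X_1 - b_1,\ldots,X_n - b_n)$. This gives exactly the desired statement in contrapositive form.

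The main obstacle — really the only subtle point — is the uniqueness-of-reduced-Gröbner-basis step and the verification that $(X_1 - b_1,\ldots,X_n - b_n)$ is \emph{itself} reduced: one must check that in the set $\{X_1 - b_1, \ldots, X_n - b_n\}$ no leading monomial divides another and no non-leading monomial lies in the leading ideal, which is immediate since the leading terms are the distinct variables $X_1,\ldots,X_n$ and the only other monomials appearing are the constants $b_i$. Everything else is a routine translation between "the system has a unique solution," "$I$ equals a given maximal ideal," and "the reduced Gröbner basis takes a given shape." I would keep the write-up to a few lines, citing the uniqueness of reduced Gröbner bases and the fact that $(X_1-b_1,\ldots,X_n-b_n)$ is maximal, since the corollary is labelled "straightforward."
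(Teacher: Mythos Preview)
Your argument for the forward direction of (1) contains a genuine gap. You write that from $I \subseteq (X_1-b_1,\ldots,X_n-b_n)$ ``the reverse inclusion follows because $(X_1-b_1,\ldots,X_n-b_n)$ is maximal and $I \neq (1)$.'' But maximality of $\mathfrak{m}_b := (X_1-b_1,\ldots,X_n-b_n)$ together with $I \neq (1)$ does \emph{not} force $\mathfrak{m}_b \subseteq I$; it only confirms the inclusion you already had. Concretely, take $F = (X_1^2, X_2, \ldots, X_n)$ and $c = 0$. Over any field the system $F(X)=0$ has the unique (set-theoretic) solution $b = 0$, yet $I = (X_1^2, X_2,\ldots,X_n) \subsetneq (X_1,\ldots,X_n)$ and the reduced Gr\"obner basis is $\{X_1^2, X_2,\ldots,X_n\}$, not $\{X_1,\ldots,X_n\}$. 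So the forward implication of (1), read with ``solution'' in the set-theoretic sense you are using, is actually false; it requires the solution to be unique \emph{with multiplicity one} (equivalently $\dim_k k[X]/I = 1$, equivalently $I$ itself maximal), and the paper's wording is loose on this point. The paper gives no proof here --- it simply labels the corollary ``straightforward'' as a specialization of van den Essen's theorem --- so there is no argument to compare against, but the gap in yours is real.

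Your proof of (2) inherits the same problem, since you deduce it by invoking the flawed direction of (1). The repair is easy and independent of (1): when $F$ is an automorphism, the $k$-algebra map $X_i \mapsto F_i$ on $k[X_1,\ldots,X_n]$ is an isomorphism, and $I$ is the image of the maximal ideal $(X_1-c_1,\ldots,X_n-c_n)$ under it, hence $I$ is itself maximal. Since also $I \subseteq \mathfrak{m}_b$ with $b = F^{-1}(c)$, maximality of $I$ now legitimately gives $I = \mathfrak{m}_b$, and your uniqueness-of-reduced-Gr\"obner-basis step finishes the job. The backward direction of (1) is fine as you have it.
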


We will give a modified version of the above theorem of van den Essen to find preimages of curves.

\begin{definition} Let $F=I-H \in k[X_1,\ldots,X_n]^n$ where $H$ has affine part zero, and  $f,g\in k[t]^n$ such that $f(0)=g(0)=0$.
Then we define the following ideals in  $\C[t][X_1,\ldots,X_n]$: $(F-f):=(F_1-f_1,\ldots,F_n-f_n)$ and $(X-g):=(X_1-g_1,\ldots,X_n-g_n)$. 
\end{definition}

The only reason we assume that $F$ is of the described form $I-H$ is because we can then use remark \ref{remark}.

\begin{theorem} \label{groebner} Let $F\in (k[X_1,\ldots,X_n])^n$ be a polynomial endomorphism, and let $f(t)$ be a curve. 
Let $B$ be the reduced groebner basis of $(F-f)$ with respect to an ordering where $t^m < X_i$ for each $m\in \N$,$ 1\leq i\leq n$. 
Now \\
(1) $B=(X_1-g_1,\ldots,X_n-g_n)$ if and only if $(F-f)=(X-g)$. In particular, if $F$ is an automorphism, then $B$ is of the said form.  \\
(2) If $F(g)=f$, then $B\subseteq (X-g)$.   Hence, a curve $g$ such that $F(g)=f$ can, if it exists, be found by finding an ideal $(X-g)\supseteq B$.
\end{theorem}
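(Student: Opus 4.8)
\textbf{Proof plan for Theorem \ref{groebner}.}

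The plan is to translate both statements into the language of ideals in $k[t][X_1,\ldots,X_n]$ and then invoke standard facts about reduced Groebner bases together with Remark \ref{remark}. For part (1), first I would observe that any ideal of the form $(X-g)=(X_1-g_1,\ldots,X_n-g_n)$ with $g_i\in k[t]$ is already, up to the obvious reordering, a reduced Groebner basis with respect to the term order in which $t^m<X_i$: each leading term is $X_i$, no leading term divides another, and no non-leading term of $X_i-g_i$ is divisible by any $X_j$ since the tails lie in $k[t]$. Hence if $(F-f)=(X-g)$, uniqueness of the reduced Groebner basis forces $B=(X_1-g_1,\ldots,X_n-g_n)$. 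Conversely, if $B=(X_1-g_1,\ldots,X_n-g_n)$ then $B$ generates $(F-f)$, so $(F-f)=(X-g)$; the polynomials $g_i$ automatically lie in $k[t]$ because the reduction of $F_i-f_i$ (whose only $X$-dependence comes from $F_i$) against a basis with leading terms $X_1,\ldots,X_n$ produces a remainder in $k[t]$, and the membership $X_i-g_i\in(F-f)$ then pins down $g_i\bmod(F-f)\cap k[t]$. For the ``in particular'' clause, if $F$ is an automorphism with inverse $G$, then over the ring $k[t]$ the system $F(X)=f(t)$ has the unique solution $X=G(f(t))\in k[t]^n$ (here $f(0)=0$ is used via Remark \ref{remark} only to match the earlier normalisation; actually for an automorphism $G(f)\in k[t]^n$ directly), so $(F-f)=(X-G(f))$ is of the stated form.

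For part (2), suppose $F(g)=f$ with $g\in k[t]^n$. Then each generator $F_i-f_i$ of $(F-f)$ satisfies $F_i(g)-f_i=0$, i.e. $F_i-f_i$ vanishes on the ``point'' $X=g(t)$, which is exactly the statement that $F_i-f_i\in(X_1-g_1,\ldots,X_n-g_n)=(X-g)$: indeed substituting $X_i\mapsto g_i$ is the quotient map $k[t][X]\to k[t]$ with kernel $(X-g)$, and $F_i-f_i$ lies in that kernel. Since $(F-f)$ is generated by these $F_i-f_i$, we get $(F-f)\subseteq(X-g)$, and because the reduced Groebner basis $B$ consists of elements of $(F-f)$, also $B\subseteq(X-g)$. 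The final sentence is then a direct consequence: to search for a preimage curve one computes $B$ and looks for an ideal of the form $(X-g)$ containing it; if $g$ exists, it will be found this way, and conversely the uniqueness supplied by Remark \ref{remark} guarantees there is at most one candidate.

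The step I expect to require the most care is the claim in part (1) that the remainders $g_i$ actually lie in $k[t]$ rather than in the full ring $k[t][X]$ — i.e. that the shape ``$X_i$ minus something in $k[t]$'' is forced and not merely possible. This hinges on the choice of monomial order ($t^m<X_i$ for all $m$, so that every monomial in the $X_j$'s exceeds every monomial in $t$ alone) and on the fact that the input generators $F_i-f_i$ have $X$-degree behaviour controlled entirely by $F_i$; one must argue that the reduced Groebner basis cannot contain an element whose leading term is a pure power of $t$ (such an element would give a relation in $k[t]$, impossible since $(F-f)\cap k[t]$ behaves well for the relevant $f$) nor, when $B$ does have the form in (1), can the tails involve the $X_j$. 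A clean way to finish is to note $k[t][X]/(F-f)\cong k[t]$ via $X\mapsto g$ exactly when such a $g\in k[t]^n$ exists, and in that isomorphism the reduced Groebner basis of the kernel is necessarily $\{X_i-g_i\}$; so (1) is really the assertion that $B$ has this form iff the quotient is $k[t]$ with the $X_i$ acting as elements of $k[t]$, which is what ``$(F-f)=(X-g)$'' says.
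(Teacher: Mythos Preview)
Your proposal is correct and follows essentially the same route as the paper, which isolates the key observation as a separate lemma: $(F-f)\subseteq(X-g)\Longleftrightarrow F(g)=f$, with equality when $F$ is an automorphism. Your proof of part~(2) and of the biconditional in~(1) is identical in substance to the paper's.

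The only real difference is in the ``in particular'' clause. The paper argues as follows: from $G(f)=g$ one gets $(G-g)\subseteq(X-f)$ by the same kernel observation you make; then \emph{substituting $X:=F$} in each containment $G_i-g_i\in(X-f)$ turns $G_i(F)-g_i$ into $X_i-g_i$ and turns the generators $X_j-f_j$ into $F_j-f_j$, yielding $(X-g)\subseteq(F-f)$. Your alternative via the quotient $k[t][X]/(F-f)\cong k[t]$ is also valid (the $k[t]$-algebra automorphism $X_i\mapsto F_i$ carries $(X-f)$ to $(F-f)$, so both quotients are $k[t]$, and the induced surjection $k[t]\to k[t]$ of $k[t]$-algebras is the identity), but the paper's substitution is more direct.

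Your final paragraph worries about something that is in fact automatic: if $B$ has the shape $\{X_i-g_i\}$ with leading terms $X_1,\ldots,X_n$, then reducedness already forces every monomial of each $g_i$ to avoid all $X_j$, so $g_i\in k[t]$ immediately. And a pure $t$-power leading term in $B$ simply means $B$ is \emph{not} of the stated form, which is allowed by the biconditional; no extra argument is needed there.
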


Note that in part (2), finding such a $g$ may have become much easier because of the simpler form of $B$ compared to $(F-f)$.

Theorem \ref{groebner} is based on the following lemma:

\begin{lemma}\label{groebhelp}
(1) $(F-f)\subseteq (X-g)$ $\Leftrightarrow$ $F(g)=f$. \\
(2) In case $F\in \GA_n(k)$, then we have
$(F-f) \subseteq (X-g) \desda (F-f) =(X-g) \desda F(g)=f$. 
\end{lemma}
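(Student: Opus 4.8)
The plan is to prove (1) first, since (2) follows from it together with basic facts about automorphisms. For (1), the key observation is that the ideal $(X-g) = (X_1-g_1,\ldots,X_n-g_n)$ in $k[t][X_1,\ldots,X_n]$ is the kernel of the evaluation (substitution) homomorphism $\Phi\colon k[t][X_1,\ldots,X_n]\lp k[t]$ sending $X_i\mapsto g_i(t)$ and fixing $t$. Indeed, this map is surjective with $X_i-g_i$ clearly in the kernel, and a division/triangular argument (reduce any polynomial modulo the $X_i-g_i$ to eliminate all the $X_i$'s, leaving an element of $k[t]$) shows these generators actually generate the whole kernel. Granting this, $(F-f)\subseteq (X-g)$ holds if and only if $\Phi$ kills each generator $F_i - f_i$, i.e.\ $F_i(g(t)) - f_i(t) = 0$ for all $i$, which is precisely $F(g)=f$.

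For part (2), assume $F\in\GA_n(k)$ with inverse $G$. The implication $F(g)=f \Rightarrow (F-f)=(X-g)$ is the only new content: the inclusion $(F-f)\subseteq (X-g)$ is already given by (1), so it remains to show $(X-g)\subseteq (F-f)$ when $F(g)=f$. Here I would use invertibility: since $G(F(X))=X$ identically in $k[X_1,\ldots,X_n]^n$, we can write each $X_i = G_i(F_1,\ldots,F_n)$, hence $X_i - g_i = G_i(F(X)) - g_i$. Now $g_i = G_i(F(g)) = G_i(f)$ because $F(g)=f$, so $X_i - g_i = G_i(F_1,\ldots,F_n) - G_i(f_1,\ldots,f_n)$, and this difference lies in the ideal generated by $F_1-f_1,\ldots,F_n-f_n$ (a polynomial in several arguments, minus the same polynomial evaluated with each argument shifted by an element of the ideal, lies in that ideal — the standard ``$p(a)-p(b)\in(a-b)$'' telescoping). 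This gives $(X-g)\subseteq(F-f)$, hence equality. The chain of equivalences in (2) then closes up: $(F-f)=(X-g)\Rightarrow(F-f)\subseteq(X-g)\Rightarrow F(g)=f$ by (1), and we have just shown $F(g)=f\Rightarrow(F-f)=(X-g)$.

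The main obstacle is the clean justification that $(X-g)$ is exactly the kernel of the substitution map $\Phi$ — that the obvious generators suffice and there is nothing hidden. This is the one place a genuine argument (rather than a formal manipulation) is needed, and it rests on the triangular shape $X_i - g_i$ with $g_i\in k[t]$ not involving any $X_j$: reducing an arbitrary $P\in k[t][X_1,\ldots,X_n]$ successively modulo $X_n-g_n,\ldots,X_1-g_1$ replaces every $X_i$ by $g_i$ and terminates in $P(g_1,\ldots,g_n)\in k[t]$, so $P\in(X-g)$ iff this remainder vanishes iff $P\in\ker\Phi$. Everything else is bookkeeping with the ideal-membership of polynomial differences.
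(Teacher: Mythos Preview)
Your proposal is correct and follows essentially the same route as the paper: part (1) is exactly the observation that reducing modulo $(X-g)$ amounts to substituting $X_i\mapsto g_i$, and for part (2) the paper likewise uses the inverse $G$ to write $X_i-g_i=G_i(F)-G_i(f)$ and lands in $(F-f)$ (phrased there as ``apply (1) to $G$ to get $(G-g)\subseteq(X-f)$, then substitute $X:=F$''), which is precisely your telescoping argument. You are simply more explicit about why $(X-g)=\ker\Phi$ and about the $p(a)-p(b)\in(a-b)$ step, but there is no substantive difference.
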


\begin{proof}
$(F-f)\subseteq (X-g) \desda (F-f)\equiv 0 \bmod (X-g) \desda F_i(X)-f_i\equiv 0 \mod (X-g)$ for all $1\leq i\leq n$ $\desda F_i(g)-f_i=0$ for all $1\leq i\leq n$ $\desda F(g)=f$, proving (1).\\
If $F$ is invertible, then let $G$ be the inverse of $F$. By (1), $(F-f)\subseteq (X-g)\desda F(g)=f$, but also $G(f)=g$ hence $(G-g)\subseteq (X-f)$. Substituting $X:=F$ in the latter yields
$(X-g)\subseteq (F-f)$, proving (2).
\end{proof}

\begin{proof}[Proof of theorem \ref{groebner}]
(1) Suppose $(F-f)=(X-g)$. Then, since $(X_1-g_1,\ldots,X_n-g_n)=(X-g)$ is a reduced basis of $(F-f)$, this must be the result of the algorithm.
The other way around, if $B=(X_1-g_1,\ldots,X_n-g_n)$, then of course $(F-f)=(X_1-g_1,\ldots,X_n-g_n)=(X-g)$ since it's the same ideal, only a different basis.\\
(2) is just a reformulation of lemma \ref{groebhelp} part (1).
\end{proof}

\noindent
{\bf Maple files of algorithms:}
If you are interested in maple files using the iterative preimage algorithm, contact the authors.

\end{document}